\definecolor{DarkBlue}{rgb}{0.00,0.00,0.55}
\definecolor{Black}{rgb}{0.00,0.00,0.00}
\newcommand{\dx}{\ \mathrm{d}x}
\title{A preconditioner for the Ohta--Kawasaki equation}
\author{
  Patrick~E.~Farrell\thanks{Mathematical Institute, University of Oxford, Oxford, UK.
    Center for Biomedical Computing, Simula Research Laboratory, Oslo, Norway
    (\texttt{patrick.farrell@maths.ox.ac.uk}).}
  \and
  John~W.~Pearson\thanks{School of Mathematics, Statistics and Actuarial Science, University of Kent, Canterbury CT2 7NF, UK (\texttt{j.w.pearson@kent.ac.uk}).
This research is funded by EPSRC grants EP/K030930/1, EP/M018857/1, and a Center
of Excellence grant from the Research Council of Norway to the Center for
Biomedical Computing at Simula Research Laboratory. This work used the ARCHER UK
National Supercomputing Service (http://www.archer.ac.uk), via a RAP award
to E.~S\"uli and Q.~Parsons. We thank NOTUR for
the allocation of computing resources on Hexagon. The authors would like to
acknowledge the assistance of C.~N.~Richardson, J.~Ring, M.~F.~Adams and
G.~N.~Wells in conducting the numerical experiments.}
  }
\begin{document}
\maketitle

\begin{abstract}
We propose a new preconditioner for the Ohta--Kawasaki equation, a nonlocal
Cahn--Hilliard equation that describes the evolution of diblock copolymer melts.
We devise a computable approximation to the inverse of the Schur complement of
the coupled second-order formulation via a matching strategy. The preconditioner
achieves mesh independence: as the mesh is refined, the number of Krylov
iterations required for its solution remains approximately constant. In
addition, the preconditioner is robust with respect to the interfacial thickness
parameter if a timestep criterion is satisfied. This enables the highly resolved
finite element simulation of three-dimensional diblock copolymer melts with over
one billion degrees of freedom.
\end{abstract}

\begin{keywords}
Ohta--Kawasaki equation, preconditioner, Schur complement, nonlocal Cahn--Hilliard equation
\end{keywords}

\begin{AMS}
65F08, 65M60, 35Q99, 82D60
\end{AMS}

\section{Introduction}
The Ohta--Kawasaki equations \cite{ohta1986} model the evolution of diblock copolymer melts.
A diblock copolymer is a polymer consisting of two subchains of different
monomers that repel each other but are joined by a covalent bond. A large
collection of these molecules is termed a melt.
These melts are of scientific and engineering interest because they undergo
phase separation of their different constituent monomers, allowing for the
design of nanostructures with particular desirable properties. The numerical
simulation of these equations is an essential tool in exploring the
associated phase diagram \cite{choksi2009}.

The Ohta--Kawasaki functional describes the free energy of a
diblock copolymer melt:
\begin{equation}
E[u] = \frac{1}{2} \int_\Omega \left( \varepsilon^2 \left| \nabla u \right|^2  \vphantom{\left|(-\Delta_N)^{-1/2}(u - m)\right|^2}\right.
     + \frac{1}{2} \left(1 - u^2\right)^2
     + \left. \sigma \left|(-\Delta_N)^{-1/2}(u - m)\right|^2\right) \dx,
\end{equation}
where $u = \pm 1$ denotes the two pure phases, $\Omega$ is the domain under
study ($(0, 1)^2$ or $(0, 1)^3$), $\varepsilon \ll 1$ is the interfacial thickness
between regions of the pure phases, $\sigma$ is the nonlocal energy coefficient,
$m$ is the (conserved) average value of $u$ in $\Omega$, and $\Delta_N$ denotes the Laplacian
with homogeneous Neumann boundary conditions. Assuming that the dynamics are
governed by a $H^{-1}(\Omega)$ gradient flow,
\begin{equation}
(u_t, \phi)_{H^{-1}(\Omega)} + E'[u; \phi] = 0,
\end{equation}
the resulting Ohta--Kawasaki dynamic equation on $\Omega \times (0, T]$ in second-order
form is given by \cite{parsons2012}
\begin{subequations}
\begin{align}
u_t - \Delta w + \sigma (u - m) &= 0, \label{eqn:oka} \\
w + \varepsilon^2 \Delta u - u(u^2 - 1) &= 0, \label{eqn:okb}
\end{align}
\end{subequations}
with homogeneous Neumann boundary conditions
\begin{equation} \label{eqn:okbcs}
\nabla u \cdot n = 0 \text{ and } \nabla w \cdot n = 0,
\end{equation}
and with initial condition $u(x, 0) = u_0(x)$.

\section{Approximating the Schur complement}

After applying a finite element discretization in space and the $\theta$-method
in time, a discrete nonlinear problem must be solved at each timestep. Each
Newton iteration involves solving a linear system of the form
\begin{equation} \label{eqn:newton}
J 
\begin{bmatrix}
\delta u \\
\delta w \\
\end{bmatrix}
=
\begin{bmatrix}
(1 + \Delta t \theta \sigma) M & \Delta t \theta K \\
-\varepsilon^2 K - M_E            & M
\end{bmatrix}
\begin{bmatrix}
\delta u \\
\delta w \\
\end{bmatrix}
=
\begin{bmatrix}
f_1 \\
f_2 \\
\end{bmatrix}.
\end{equation}
where $J$ is the Jacobian, $M$ is the standard mass matrix with entries of the form $\int_{\Omega}\phi_i\phi_j \dx$, $K$ is
the standard discretization of the Neumann Laplacian with entries $\int_{\Omega}\nabla\phi_i\cdot\nabla\phi_j \dx$, 
$M_E$ is a mass matrix involving a spatially varying coefficient with entries
$\int_{\Omega}(3u^2 - 1)\phi_i\phi_j \dx$,
$\delta u$ is the update for $u$, $\delta w$ is the update for $w$, $f_1$ and $f_2$
gather the source term and contributions from previous time levels, and
$\Delta t$ is the timestep. As the
discretization is refined and the dimension of \eqref{eqn:newton} increases, it
becomes impractical to employ direct solvers and preconditioned Krylov methods
must be used instead. As the matrix in \eqref{eqn:newton} is nonsymmetric, a suitable
iterative solver such as GMRES
\cite{saad1986} is required to compute its solution.

We note however that there are structures within the system that can be exploited within a
solver. For example, $M$ is symmetric positive definite, $K$ is symmetric positive
semidefinite (with one zero eigenvalue corresponding to the nullspace of constants),
and $M_E$ is symmetric.

Preconditioners for block-structured matrices typically involve approximating
the Schur complement of the system. Let the Jacobian $J$ be partitioned as
\begin{equation}
J = 
\begin{bmatrix}
A & B \\
C & D
\end{bmatrix}.
\end{equation}
Consider the preconditioner
\begin{equation} \label{eqn:prec}
P^{-1} = 
\begin{bmatrix}
A & 0 \\
C & S
\end{bmatrix}^{-1}
=
\begin{bmatrix}
A^{-1} & 0 \\
0      & S^{-1}
\end{bmatrix}
\begin{bmatrix}
I & 0 \\
-CA^{-1} & I
\end{bmatrix},
\end{equation}
where $S = D - CA^{-1}B$ is the Schur complement with respect to $A$.
If exact inner solves are used for $A^{-1}$ and $S^{-1}$, then the
preconditioned operator $P^{-1} J$ has minimal polynomial degree two and GMRES converges
exactly in two iterations \cite{ipsen2001,murphy2000}. Here,
$A$ is a scaled mass matrix and is thus straightforward to solve with
standard techniques such as Chebyshev semi-iteration. Thus, the efficient preconditioning of $S$ is
the key to the fast solution of the Newton step \eqref{eqn:newton}, and thus of the
dynamic Ohta--Kawasaki equations \eqref{eqn:oka}--\eqref{eqn:okb}.

The Schur complement of \eqref{eqn:newton} is
\begin{equation} \label{eqn:schur}
S = M + \varepsilon^2 c K M^{-1} K
+ c M_E M^{-1} K,
\end{equation}
with constant $c = {(\Delta t \theta)}/{(1 + \Delta t \theta \sigma)} > 0$.
In general it is very difficult to precondition the sum of different matrices.
The approach adopted here is the \emph{matching strategy} of Pearson and Wathen
\cite{pearson2012,bosch2014}: the sum is approximated by the product of
matrices, carefully chosen to match as many terms of the sum as possible.
We propose the approximation 
\begin{equation} \label{eqn:outer_approx}
S \approx \tilde{S} = \hat{S} M^{-1} \hat{S},
\end{equation}
with
\begin{equation}
\hat{S} = M + \varepsilon \sqrt{c} K.
\end{equation}
This approximation $\tilde{S}$ is the product of three invertible matrices, and
so its inverse action can be efficiently computed by 
\begin{equation} \label{eqn:schurinv}
\tilde{S}^{-1} = \hat{S}^{-1} M \hat{S}^{-1}.
\end{equation}
The action of $\hat{S}^{-1}$ can be
efficiently approximated with algebraic multigrid techniques \cite{henson2002} to yield
a computationally cheap preconditioner.

Expanding the approximation \eqref{eqn:outer_approx}, we find that it matches the first two terms of the
Schur complement exactly:
\begin{equation} \label{eqn:schurapprox}
\tilde{S} = M + \varepsilon^2 c K M^{-1} K
+ 2\varepsilon \sqrt{c} K.
\end{equation}
Recall that $M_E$ depends on the current estimate of the solution $u$.  With
this matching strategy, the term involving $M_E$ in the Schur complement
\eqref{eqn:schur} has been neglected, so that $\tilde{S}$ does not vary between
Newton iterations and no reassembly or algebraic multigrid reconstruction is
required. The Schur complement approximation $\tilde{S}$
is straightforward and feasible to apply, and its effectiveness within a
preconditioner will depend to a large extent on the effect of the neglected
third term from $S$. In the next section we present some analysis to explain
why we expect our approximation to work well as a preconditioner.

We note in passing that it is possible to rearrange \eqref{eqn:newton} to the
symmetric form
\begin{equation*}
\begin{bmatrix}
\frac{\Delta t \theta}{1 + \Delta t \theta \sigma} K & M \\
M & -\varepsilon^2 K - M_E
\end{bmatrix}
\begin{bmatrix}
\delta w \\
\delta u \\
\end{bmatrix}
=
\begin{bmatrix}
\frac{1}{1 + \Delta t \theta \sigma} f_1 \\
f_2 \\
\end{bmatrix}.
\end{equation*}
It is likely that a similar approach could be used to precondition this
rearranged system, providing one takes into account the singular $(1,1)$-block.
Furthermore, as discussed in the next section, it is more straightforward to
prove the rates of convergence of iterative methods for symmetric systems.
However, whereas the $(1,1)$-block $A$ of \eqref{eqn:newton} may be well
approximated using Chebyshev semi-iteration, the stiffness matrix $K$ arising in
the $(1,1)$-block of the rearranged system would require a more expensive method
such as a multigrid process. We therefore prefer to solve the nonsymmetric system
\eqref{eqn:newton} due to the ease with which we may compute the
approximate action of $A^{-1}$.

\section{Analysis}
We now wish to justify why our preconditioner is likely to be
effective for the problem being solved. It is well-known that for nonsymmetric
matrix systems it is extremely difficult to provide a concrete proof for the
rate of convergence of an iterative method, as opposed to symmetric systems for
which convergence is controlled only by the eigenvalues of the preconditioned
system $P^{-1}J$. For nonsymmetric operators the spectrum alone does not determine
the convergence of GMRES \cite{greenbaum1996}; furthermore, the usual techniques
for establishing eigenvalue bounds do not apply to the nonsymmetric case
\cite{bosch2014}. However, in practice, the tight clustering of eigenvalues for
the preconditioned system frequently leads to strong convergence properties,
even though this is not theoretically guaranteed. Given the challenges faced
when solving nonsymmetric matrix systems, we present an analysis that
establishes spectral equivalence of a slightly perturbed operator, and
corroborate this analysis with numerical experiments in section
\ref{sec:experiments} which demonstrate mesh independence for the full problem.

Observe that when applying the preconditioner
\begin{equation}
\ \begin{bmatrix}
\tilde{A} & 0 \\
C & \tilde{S}
\end{bmatrix}^{-1}
=
\begin{bmatrix}
\tilde{A}^{-1} & 0 \\
0      & \tilde{S}^{-1}
\end{bmatrix}
\begin{bmatrix}
I & 0 \\
-C\tilde{A}^{-1} & I
\end{bmatrix}
\end{equation}
for the Jacobian $J$, the crucial steps are applying $\tilde{A}^{-1}$ and
$\tilde{S}^{-1}$. For the matrix system \eqref{eqn:newton}, the inverse of the
sub-block $A=(1+\Delta{}t\theta\sigma)M$ may be approximated accurately and
cheaply using a Jacobi iteration or Chebyshev semi-iteration
\cite{golub1961a,golub1961b,wathen2009}. It is therefore
instructive to consider the preconditioned system
\begin{equation}
\ \begin{bmatrix}
A & 0 \\
C & \tilde{S}
\end{bmatrix}^{-1}
\begin{bmatrix}
A & B \\
C & D
\end{bmatrix}
=
\begin{bmatrix}
A^{-1} & 0 \\
0      & \tilde{S}^{-1}
\end{bmatrix}
\begin{bmatrix}
I & 0 \\
-CA^{-1} & I
\end{bmatrix}
\begin{bmatrix}
A & B \\
C & D
\end{bmatrix}
=
\begin{bmatrix}
I & A^{-1}B \\
0 & \tilde{S}^{-1}S
\end{bmatrix},
\end{equation}
where the inexactness of our preconditioner arises from the stated approximation
$\tilde{S}$ of the Schur complement $S$. The eigenvalues of this system, which
serve as a guide as to the effectiveness of the preconditioner, are either equal
to $1$, or correspond to the eigenvalues of $\tilde{S}^{-1}S$.

We therefore examine the spectrum of the
preconditioned Schur complement $\tilde{S}^{-1}S$, the matrix which
governs the effectiveness of our algorithm, in the ideal setting where
the matrix $M+\varepsilon \sqrt{c}K$ is inverted exactly. We first present a short result
concerning the reality of the eigenvalues.

\begin{lemma}
The eigenvalues of $\tilde{S}^{-1} S$ are real.
\end{lemma}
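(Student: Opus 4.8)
The plan is to exploit the symmetry available in the constituent matrices by passing to a congruent, genuinely symmetric eigenvalue problem. The eigenvalue equation $S v = \lambda \tilde S v$ is, written out using \eqref{eqn:schur} and \eqref{eqn:schurapprox},
\begin{equation*}
\left( M + \varepsilon^2 c\, K M^{-1} K + c\, M_E M^{-1} K \right) v = \lambda \left( M + \varepsilon^2 c\, K M^{-1} K + 2\varepsilon\sqrt{c}\, K \right) v.
\end{equation*}
The only non-symmetric ingredient on either side is the product $M^{-1}K$ (and $M_E M^{-1}K$), so the natural move is to substitute $v = M^{-1/2} y$ and multiply through by $M^{-1/2}$ on the left. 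Setting $\mathcal{K} := M^{-1/2} K M^{-1/2}$ (symmetric positive semidefinite) and $\mathcal{E} := M^{-1/2} M_E M^{-1/2}$ (symmetric, since $M_E$ is symmetric), this transforms the problem into
\begin{equation*}
\left( I + \varepsilon^2 c\, \mathcal{K}^2 + c\, \mathcal{E}\mathcal{K} \right) y = \lambda \left( I + \varepsilon^2 c\, \mathcal{K}^2 + 2\varepsilon\sqrt{c}\, \mathcal{K} \right) y,
\end{equation*}
which has the same eigenvalues $\lambda$ as $\tilde S^{-1}S$ because congruence by the invertible matrix $M^{1/2}$ preserves the spectrum of the pencil.

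Next I would diagonalise $\mathcal{K}$: write $\mathcal{K} = Q \Lambda Q^\top$ with $Q$ orthogonal and $\Lambda \ge 0$ diagonal, and conjugate the whole pencil by $Q^\top$. The right-hand matrix $\hat{\mathcal{S}} := I + \varepsilon^2 c\, \Lambda^2 + 2\varepsilon\sqrt{c}\,\Lambda$ becomes diagonal with strictly positive entries $(1 + \varepsilon\sqrt{c}\,\Lambda_{ii})^2 > 0$, so it has a symmetric positive definite square root. The left-hand matrix becomes $I + \varepsilon^2 c\,\Lambda^2 + c\, \tilde{\mathcal{E}}\Lambda$, where $\tilde{\mathcal{E}} = Q^\top \mathcal{E} Q$ is still symmetric — but note $\tilde{\mathcal{E}}\Lambda$ need not be symmetric. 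The key remaining step is to symmetrise this generalized eigenvalue problem by a further congruence: with $\mathcal{D} := \hat{\mathcal{S}}^{1/2}$ (symmetric positive definite diagonal), the eigenvalues of the pencil coincide with those of $\mathcal{D}^{-1}(I + \varepsilon^2 c\,\Lambda^2 + c\,\tilde{\mathcal{E}}\Lambda)\mathcal{D}^{-1}$, and I need this matrix to be symmetric. Symmetry of the first two terms is immediate; for the cross term I would use that $\mathcal{D}$ and $\Lambda$ are both diagonal, hence commute, so $\mathcal{D}^{-1}\tilde{\mathcal{E}}\Lambda\mathcal{D}^{-1} = \mathcal{D}^{-1}\tilde{\mathcal{E}}\mathcal{D}^{-1}\cdot \mathcal{D}\Lambda\mathcal{D}^{-1}$... which is the obstacle: $\tilde{\mathcal{E}}$ is full, so $\mathcal{D}^{-1}\tilde{\mathcal{E}}\Lambda\mathcal{D}^{-1}$ is not obviously symmetric.

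The clean fix, which I expect to be the actual argument, is to symmetrise \emph{before} diagonalising $\mathcal{K}$ by absorbing $\mathcal K$ into the similarity. Observe that $S$ and $\tilde S$ can both be written as $M^{1/2}(\,\cdot\,)M^{1/2}$ where the inner factors commute with... no — instead, left-multiply the original pencil $S v = \lambda \tilde S v$ by $K M^{-1}$ on suitable terms. Concretely: $\tilde S^{-1} S$ is similar to $\hat S^{-1} S \hat S^{-1}\cdot \hat S M^{-1}\hat S = \hat S^{-1} S \hat S^{-1}$ composed with $M$... Let me instead use the representation $\tilde S = \hat S M^{-1} \hat S$ directly. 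Then $\tilde S^{-1} S = \hat S^{-1} M \hat S^{-1} S$ is similar, via conjugation by $\hat S^{-1}$, to $M \hat S^{-1} S \hat S^{-1}$, and in turn (conjugating by $M^{1/2}$) to $M^{1/2} \hat S^{-1} S \hat S^{-1} M^{1/2}$. Now I claim $M^{1/2}\hat S^{-1} = \hat S^{-1}_{\mathrm{sym}} M^{1/2}$ for a symmetric matrix, because $\hat S = M + \varepsilon\sqrt c K = M^{1/2}(I + \varepsilon\sqrt c\, \mathcal K)M^{1/2}$ gives $M^{1/2}\hat S^{-1} M^{1/2} = (I + \varepsilon\sqrt c\,\mathcal K)^{-1}=:\mathcal{H}$, symmetric positive definite. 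Hence $\tilde S^{-1}S$ is similar to $\mathcal H\, \big(M^{-1/2} S M^{-1/2}\big)\, \mathcal H$. The middle factor $M^{-1/2} S M^{-1/2} = I + \varepsilon^2 c\,\mathcal K^2 + c\,\mathcal E\mathcal K$ still has the non-symmetric term $\mathcal E \mathcal K$, so even here I have not escaped the obstacle — which tells me the symmetry of $\tilde S^{-1} S$ genuinely hinges on a special structural identity relating $M_E$, $M$ and $K$ in this discretisation (plausibly that $M_E$ is itself a polynomial in, or commutes with, the relevant operators under the finite element basis, or that one works in the eigenbasis of $\mathcal K$ where $M_E$'s contribution is handled termwise). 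I would therefore look for the identity $\mathcal{E}\mathcal{K}$ being similar to a symmetric matrix via $\mathcal{K}^{1/2}$: indeed $\mathcal K^{1/2}(\mathcal E \mathcal K)\mathcal K^{-1/2} = \mathcal K^{1/2}\mathcal E\mathcal K^{1/2}$ is symmetric, and the same conjugation fixes the other two terms (they are functions of $\mathcal K$, hence commute with $\mathcal K^{1/2}$) — \emph{provided} $\mathcal K$ is invertible. The genuine hard part is thus the semidefinite case: handling the one-dimensional nullspace of $K$ (the constants), which I would treat separately by restricting to the orthogonal complement of that nullspace, observing the pencil acts as the identity there, and invoking a limiting/continuity argument or a direct block decomposition to conclude that all eigenvalues of $\tilde S^{-1}S$ are real.
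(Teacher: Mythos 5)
Your final argument rests on the same structural observation as the paper's proof --- the nonsymmetric term $M_E M^{-1} K$ becomes symmetric once it is sandwiched between two factors built from $K$ --- but the execution differs, and one case is left unfinished. You conjugate the (already $M^{-1/2}$-congruenced) pencil by $\mathcal{K}^{1/2}$, so that $\tilde{S}^{-1}S$ becomes the product of the symmetric positive definite matrix $\bigl(I+\varepsilon\sqrt{c}\,\mathcal{K}\bigr)^{-2}$ with the symmetric matrix $I+\varepsilon^2 c\,\mathcal{K}^2+c\,\mathcal{K}^{1/2}\mathcal{E}\mathcal{K}^{1/2}$; this is valid and yields real eigenvalues, but only when $K$ is invertible. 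The paper instead left-multiplies $Sv=\lambda\tilde{S}v$ by $KM^{-1}$ to obtain $Fv=\lambda Gv$ with $F=K+\varepsilon^2 c\,KM^{-1}KM^{-1}K+c\,KM^{-1}M_EM^{-1}K$ and $G=K+\varepsilon^2 c\,KM^{-1}KM^{-1}K+2\varepsilon\sqrt{c}\,KM^{-1}K$ both symmetric, and reads off $\lambda=v^*Fv/v^*Gv\in\mathbb{R}$. That one-sided version buys exactly what your sketch is missing: in the semidefinite (Neumann) case, which is the case that actually arises here, the paper simply splits on whether the eigenvector lies in $\mathrm{null}(K)$ --- if it does, $Sv=Mv=\tilde{S}v$ forces $\lambda=1$; if it does not, $v^*Gv>0$ because $K$ is positive semidefinite, so the ratio is well defined. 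Your proposed finish for the singular case (a block decomposition along $\mathrm{null}(\mathcal{K})\oplus\mathrm{null}(\mathcal{K})^{\perp}$, or continuity in a positive definite perturbation of $K$) can indeed be completed: $\mathrm{null}(\mathcal{K})$ is invariant, the pencil is block triangular with an identity block there, and your $\mathcal{K}^{1/2}$ argument applies to the complementary block, where the restriction of $\mathcal{K}$ is positive definite. But as written the proof stops at a sketch precisely where the paper's two-line dichotomy does the work, so you should carry that step out. Finally, the two abandoned attempts and the speculation that reality must hinge on some special identity relating $M_E$, $M$ and $K$ should be cut: your own $\mathcal{K}^{1/2}$ conjugation shows that no such identity is needed.
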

\begin{proof}
If $v \in \textrm{null}(K)$ and $v \neq 0$, then
\begin{equation} \label{eqn:geneig}
           Sv = \lambda \tilde{S} v
\end{equation}
implies
\begin{equation}
           Mv = \lambda M v,
\end{equation}
and hence $\lambda = 1 \in \mathbb{R}$.

On the other hand, if $v \notin \textrm{null}(K)$, then \eqref{eqn:geneig} implies
\begin{equation}
\ KM^{-1}Sv=\lambda{}KM^{-1}\tilde{S}v~~\Rightarrow~~F v = \lambda G v~~\Rightarrow~~v^* F v = \lambda v^* G v,
\end{equation}
where the matrices $F$ and $G$ are given by
\begin{align}
F &=  K + \varepsilon^2 cK M^{-1} K M^{-1} K + c K M^{-1} M_F M^{-1} K, \\
G &= K + \varepsilon^2 c KM^{-1}KM^{-1}K + 2 \varepsilon \sqrt{c}KM^{-1}K.
\end{align}
Using the symmetry of $F$ and $G$, it follows that $v^* F v$ and $v^* G v$ are
real and positive, and hence $\lambda \in \mathbb{R}$.
\end{proof}

To motivate why $\tilde{S}$ should serve as an effective approximation of $S$,
we now present a result on the eigenvalues of $\tilde{S}^{-1}S$ in the perturbed
setting that $K$ is symmetric positive definite. (The matrix $K$ is in practice
positive semidefinite, with a nullspace of dimension one.)
\begin{lemma}
Perturb $K$ to be symmetric positive definite. Then the eigenvalues of
$\tilde{S}^{-1}S$ satisfy:
\begin{equation}
\ \lambda(\tilde{S}^{-1}S)\in\left[\frac{1}{2}+\frac{1}{2\varepsilon}\hspace{0.1em}\sqrt{c}\hspace{0.1em}\lambda_{-},1+\frac{1}{2\varepsilon}\hspace{0.1em}\sqrt{c}\hspace{0.1em}\lambda_{+}\right),
\end{equation}
where $\lambda_{-}=\min\{\lambda_{\min}(M^{-1}M_E),0\}$, $\lambda_{+}=\max\{\lambda_{\max}(M^{-1}M_E),0\}$ respectively, and $\lambda_{\min}$, $\lambda_{\max}$ are the minimum and maximum
eigenvalues of a matrix.
\end{lemma}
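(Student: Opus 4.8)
The plan is to work directly with the generalized eigenvalue problem $Sv = \lambda\tilde{S}v$ and reduce it, exactly as in the proof of the previous lemma, to the symmetric problem $Fv = \lambda Gv$. Because $K$ has here been perturbed to be symmetric positive definite it is invertible, so left-multiplying $Sv = \lambda\tilde{S}v$ by $KM^{-1}$ shows that for each eigenpair $(\lambda,v)$ one has $Fv = \lambda Gv$, and hence $\lambda = (v^{*}Fv)/(v^{*}Gv)$ since $G$ is positive definite; here $F$, $G$ are the symmetric matrices of the previous lemma. I would then substitute $s = M^{-1}Kv$, which is a bijection between nonzero vectors since $M$ and $K$ are invertible, so that an estimate valid over all $s \neq 0$ yields an interval containing every eigenvalue. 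Using the identity $Kv = Ms$ repeatedly, a short computation gives
\begin{align*}
v^{*}Fv &= s^{*}MK^{-1}Ms + \varepsilon^{2}c\,s^{*}Ks + c\,s^{*}M_{E}s, \\
v^{*}Gv &= s^{*}MK^{-1}Ms + \varepsilon^{2}c\,s^{*}Ks + 2\varepsilon\sqrt{c}\,s^{*}Ms.
\end{align*}
Writing $\alpha = s^{*}MK^{-1}Ms$, $\gamma = \varepsilon^{2}c\,s^{*}Ks$ and $\beta = 2\varepsilon\sqrt{c}\,s^{*}Ms$, all strictly positive for $s \neq 0$, this reads $\lambda = (\alpha + \gamma + c\,s^{*}M_{E}s)/(\alpha + \gamma + \beta)$.

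The decisive ingredient is the inequality $\alpha + \gamma \geq \beta$, which I would obtain by completing the square at the matrix level:
\begin{equation*}
MK^{-1}M - 2\varepsilon\sqrt{c}\,M + \varepsilon^{2}c\,K = \left(\varepsilon\sqrt{c}\,K^{1/2} - K^{-1/2}M\right)^{*}\left(\varepsilon\sqrt{c}\,K^{1/2} - K^{-1/2}M\right) \succeq 0 ,
\end{equation*}
so that $\alpha + \gamma - \beta = \|(\varepsilon\sqrt{c}\,K^{1/2} - K^{-1/2}M)s\|^{2} \geq 0$; equivalently one may argue by AM--GM, $\alpha + \gamma \geq 2\varepsilon\sqrt{c}\sqrt{(s^{*}MK^{-1}Ms)(s^{*}Ks)}$, followed by Cauchy--Schwarz applied to $K^{-1/2}Ms$ and $K^{1/2}s$. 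In particular $(\alpha+\gamma)/(\alpha+\gamma+\beta) \geq \tfrac{1}{2}$, while trivially $(\alpha+\gamma)/(\alpha+\gamma+\beta) < 1$ and $\beta/(\alpha+\gamma+\beta) < 1$.

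To finish, I would bound $c\,s^{*}M_{E}s$ by the generalized Rayleigh quotient inequalities $c\lambda_{\min}(M^{-1}M_{E})\,s^{*}Ms \leq c\,s^{*}M_{E}s \leq c\lambda_{\max}(M^{-1}M_{E})\,s^{*}Ms$, together with the elementary identity $c\,s^{*}Ms = \frac{\sqrt{c}}{2\varepsilon}\beta$. For the upper bound, $c\,s^{*}M_{E}s \leq \lambda_{+}\frac{\sqrt{c}}{2\varepsilon}\beta$ gives $\lambda \leq (\alpha+\gamma)/(\alpha+\gamma+\beta) + \frac{\sqrt{c}\lambda_{+}}{2\varepsilon}\cdot\beta/(\alpha+\gamma+\beta) < 1 + \frac{\sqrt{c}}{2\varepsilon}\lambda_{+}$, using $\lambda_{+} \geq 0$ and the two strict fractional bounds above. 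For the lower bound, $c\,s^{*}M_{E}s \geq \lambda_{\min}(M^{-1}M_{E})\frac{\sqrt{c}}{2\varepsilon}\beta$ gives $\lambda \geq (\alpha+\gamma)/(\alpha+\gamma+\beta) + \frac{\sqrt{c}\lambda_{\min}(M^{-1}M_{E})}{2\varepsilon}\cdot\beta/(\alpha+\gamma+\beta)$; the first term is at least $\tfrac{1}{2}$, and since $\beta/(\alpha+\gamma+\beta) \in (0,1]$ the second term is nonnegative when $\lambda_{\min}(M^{-1}M_{E}) \geq 0$ and is at least $\frac{\sqrt{c}}{2\varepsilon}\lambda_{\min}(M^{-1}M_{E})$ when $\lambda_{\min}(M^{-1}M_{E}) < 0$, so in either case $\lambda \geq \tfrac{1}{2} + \frac{\sqrt{c}}{2\varepsilon}\lambda_{-}$. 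I expect the main obstacle to be recognising the completing-the-square identity (equivalently the right AM--GM/Cauchy--Schwarz pairing) that delivers $\alpha + \gamma \geq \beta$ and hence the factor $\tfrac{1}{2}$; everything after that is bookkeeping, though one must keep track of which inequalities are strict in order to land in the half-open interval claimed.
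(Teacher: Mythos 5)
Your proof is correct and is essentially the paper's argument: after your change of variables $s=M^{-1}Kv$, the Rayleigh quotient $(\alpha+\gamma+c\,s^{*}M_{E}s)/(\alpha+\gamma+\beta)$ is exactly the quantity $1+\mathcal{R}$ that the paper extracts from $S\tilde{S}^{-1}=I+(-2\varepsilon\sqrt{c}M+cM_E)(MK^{-1}M+\varepsilon^2 cK+2\varepsilon\sqrt{c}M)^{-1}$, and your completing-the-square inequality $\alpha+\gamma\geq\beta$ is the paper's bound $\mathcal{R}_1\geq-\tfrac{1}{2}$ obtained from $(a-b)^{T}(a-b)\geq 0$ with $a=K^{-1/2}Mv$, $b=\varepsilon\sqrt{c}K^{1/2}v$. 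The only difference is cosmetic: you reach this quotient via the symmetrized pencil $Fv=\lambda Gv$ from the preceding lemma rather than by manipulating $S\tilde{S}^{-1}$ directly.
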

\begin{proof}
First note that the eigenvalues of $\tilde{S}^{-1}S$ and $S\tilde{S}^{-1}$ are
the same by the similarity of the two matrices. We therefore examine the matrix
$S\tilde{S}^{-1}$, using the properties that $M$ is symmetric positive definite,
and that $M_E$ is symmetric. Using our assumption on $K$, $K^{-1}$ exists, and
it can be shown that
\begin{align}
\ S\tilde{S}^{-1}={}&\left(M + \varepsilon^2 c K M^{-1} K + c M_E M^{-1} K\right)\left(M + \varepsilon^2 c K M^{-1} K + 2\varepsilon \sqrt{c} K\right)^{-1} \\
\ ={}&I+\left(- 2\varepsilon \sqrt{c} K + c M_E M^{-1} K\right)\left(M + \varepsilon^2 c K M^{-1} K + 2\varepsilon \sqrt{c} K\right)^{-1} \\
\ ={}&I+\left(-2\varepsilon \sqrt{c}M+cM_E\right)M^{-1}K\left(M + \varepsilon^2 c K M^{-1} K + 2\varepsilon \sqrt{c} K\right)^{-1} \\
\ \label{eqn:equiv_schur} ={}&I+\left(-2\varepsilon \sqrt{c}M+cM_E\right)\left(MK^{-1}M+\varepsilon^2 cK+2\varepsilon \sqrt{c}M\right)^{-1}.
\end{align}
Therefore, each eigenvalue of $S\tilde{S}^{-1}$ is given by
$1+\lambda_{\mathcal{R}}$, where $\lambda_{\mathcal{R}}$ may be bounded using
a Rayleigh quotient argument, by symmetry of the matrices in \eqref{eqn:equiv_schur}.
The Rayleigh quotient is given by
\begin{align}
\ \mathcal{R}:={}&\frac{v^{T}\left(-2\varepsilon \sqrt{c}M+cM_E\right)v}{v^{T}\left(MK^{-1}M+\varepsilon^2 cK+2\varepsilon \sqrt{c}M\right)v} \\
\ ={}&\underbrace{-2\varepsilon \sqrt{c}\hspace{0.15em}\frac{v^{T}Mv}{v^{T}\left(MK^{-1}M+\varepsilon^2 cK+2\varepsilon \sqrt{c}M\right)v}}_{\mathcal{R}_{1}} \\
\ &+\underbrace{c\hspace{0.15em}\frac{v^{T}M_Ev}{v^{T}\left(MK^{-1}M+\varepsilon^2 cK+2\varepsilon \sqrt{c}M\right)v}}_{\mathcal{R}_{2}}.
\end{align}

First observe that $\mathcal{R}_{1}<0$. To find a lower bound on
$\mathcal{R}_{1}$, we note that
$\mathcal{R}_{1}=-2a^{T}b/(a^{T}a+b^{T}b+2a^{T}b)$, where $a=K^{-1/2}Mv$,
$b=\varepsilon \sqrt{c}K^{1/2}v$, and therefore $\mathcal{R}_{1}\geq-\frac{1}{2}$ by
simple algebraic manipulation. Therefore $\mathcal{R}_{1}\in[-\frac{1}{2},0)$.

Now note that $\mathcal{R}_{2}$ could be positive or negative, depending on the
sign of $v^{T}M_Ev$. If this quantity is positive for some $v$,
\begin{equation}
\ \mathcal{R}_{2}<\frac{c\hspace{0.1em}v^{T}M_Ev}{2\varepsilon \sqrt{c}\hspace{0.1em}v^{T}Mv}=\frac{\sqrt{c}}{2\varepsilon}\frac{v^{T}M_Ev}{v^{T}Mv}\leq\frac{\sqrt{c}}{2\varepsilon}\hspace{0.1em}\lambda_{\max}(M^{-1}M_E),
\end{equation}
and otherwise $\mathcal{R}_{2}\leq0$. Similarly, if $\mathcal{R}_{2}$ is negative for some $v$, then
\begin{equation}
\ \mathcal{R}_{2}>\frac{c\hspace{0.1em}v^{T}M_Ev}{2\varepsilon \sqrt{c}\hspace{0.1em}v^{T}Mv}=\frac{\sqrt{c}}{2\varepsilon}\frac{v^{T}M_Ev}{v^{T}Mv}\geq\frac{\sqrt{c}}{2\varepsilon}\hspace{0.1em}\lambda_{\min}(M^{-1}M_E),
\end{equation}
and $\mathcal{R}_{2}\geq0$ otherwise. Hence $\mathcal{R}_{2}\in[\frac{1}{2\varepsilon}\hspace{0.1em}\sqrt{c}\hspace{0.1em}\lambda_{-},\frac{1}{2\varepsilon}\hspace{0.1em}\sqrt{c}\hspace{0.1em}\lambda_{+}]$.

Combining the bounds for $\mathcal{R}_{1}$ and $\mathcal{R}_{2}$ gives bounds
for $\lambda_{\mathcal{R}}$, and hence for $\lambda(\tilde{S}^{-1}S)$ as above.
\end{proof}

We highlight that the case where $K$ is positive semidefinite rather than
positive definite is a more difficult one theoretically, as the expression
\eqref{eqn:equiv_schur} for $S\widetilde{S}^{-1}$ cannot be derived (it requires
the existence of $K^{-1}$). However it is clear that the spectral properties of
$K$ are almost identical for Dirichlet (positive definite) and Neumann (positive
semidefinite) problems, apart from the single zero eigenvalue in the
semidefinite setting, and we generally find the convergence rates of iterative methods
are very similar as a result. Indeed in practice we observe that
the eigenvalues of $\tilde{S}^{-1}S$ in our numerical experiments reflect the
predicted bounds very well.

We also note that we observe the eigenvalues of $M^{-1}M_E$ to be mesh
independent, and that the bounds for $\lambda(\tilde{S}^{-1}S)$ can
therefore be driven tighter by decreasing $\Delta{}t$. Therefore, as the
dimension of the matrix system is increased by taking a finer discretization in
space or time, we predict that our preconditioner should not worsen in
performance. Furthermore, if $\Delta t$ is chosen to scale like $\varepsilon^2$,
then the eigenvalue bounds asymptote to a constant as the interfacial thickness
parameter $\varepsilon \to 0$.  Hence, we also expect the preconditioner to be
robust to changes in this parameter.  Note that the scaling $\Delta t \sim
\varepsilon^2$ is typically necessary for stability in time discretization
schemes \cite{benesova2014}, and thus this criterion does not represent an
additional restriction on the timestep size.

In the next section we demonstrate how our proposed solver performs for a
practical test problem, and examine whether the predicted robustness is
achieved.

\section{Numerical results} \label{sec:experiments}
The solver was implemented in 338 lines of Python using version 1.6 of the FEniCS finite
element library \cite{logg2011}\footnote{The code is available at \url{http://bitbucket.org/pefarrell/ok-solver}.}.
The experiment conducted used $\Omega = (0, 1)^3$, $m = 0.4$, $\varepsilon =
0.02$, $\sigma = 100$, and an initial
condition of
\begin{equation}
u_0(x, y, z) = m + p(x, y, z),
\end{equation}
where the perturbation $p$ must be chosen to have integral zero and satisfy $\nabla p \cdot n = 0$ on
the boundary.
In this experiment we chose
\begin{equation}
p(x, y, z) = \frac{\cos{(2 \pi x)} \cos{(2 \pi y)} \cos{(2 \pi z)}}{50}.
\end{equation}
The initial condition for $w$ was computed from $u_0$ via \eqref{eqn:okb}.
In this parameter regime the solution is expected to consist of spherical
regions of negative material ($u \approx -1$) embedded within a background of
positive material ($u \approx 1$), Figure \ref{fig:simulation}.

\begin{figure}
\centering
\includegraphics[width=10cm]{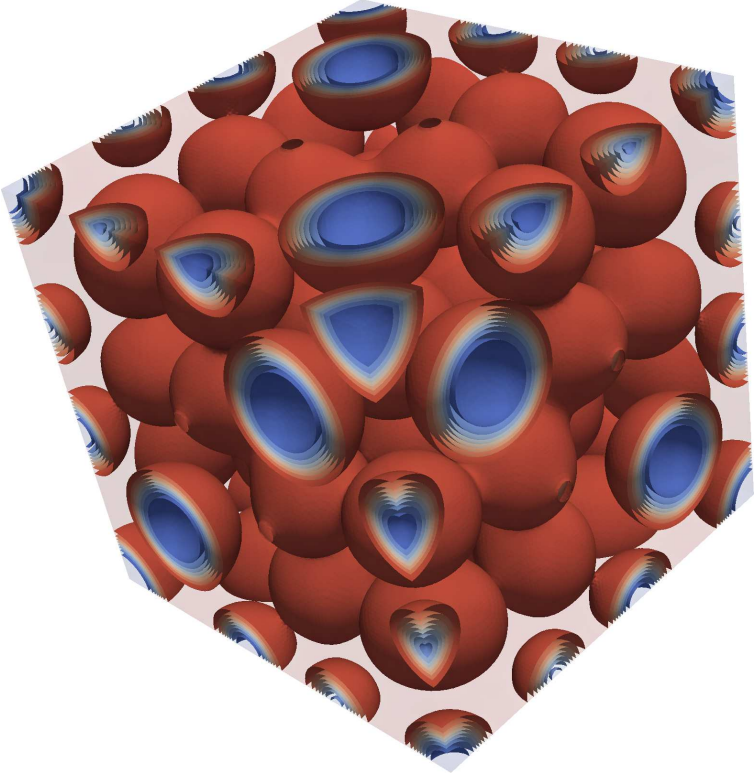}
\caption{The final concentration $u$ of the simulation described in section \ref{sec:experiments}. The
solution consists of regions of negative material ($u \approx -1$, in blue) embedded within
a background of positive material ($u \approx 1$, in red). The figure shows the isosurfaces corresponding
to constant values of $u$.}
\label{fig:simulation}
\end{figure}

The discretization used standard piecewise linear finite elements for $u$ and
$w$, a timestep $\Delta t = \varepsilon^2$, a final time $T = 300 \Delta t$,
and an implicitness parameter
$\theta = 0.5$. The mesh was generated to achieve approximately $2.5 \times 10^5$
degrees of freedom per core, to investigate how the number of
Krylov iterations required to solve \eqref{eqn:newton} varies as the mesh is
refined. The preconditioner \eqref{eqn:prec} approximated the action of $A^{-1}$
with ten Chebyshev semi-iterations with SOR preconditioning, and approximated the action of
$S^{-1}$ with two Richardson iterations of $\tilde{S}^{-1}$. Each action of
$\hat{S}^{-1}$ was in turn approximated with five V-cycles of the BoomerAMG algebraic
multigrid solver \cite{henson2002}. GMRES was used as the outer Krylov solver.

\begin{table}
\centering
\begin{tabular}{c|c|c}
\toprule
Degrees of freedom ($\times 10^6$) & Number of cores & Iterations \\
\midrule
0.265   & 1 & 8.2 \\
2.060   & 8 & 8.0 \\
16.24   & 64 & 8.0 \\
128.9   & 512 & 8.0 \\
1027    & 4096 & 8.0 \\
\bottomrule
\end{tabular}
\caption{Average number of GMRES iterations per Newton step with preconditioner
\eqref{eqn:prec} and Schur complement approximation \eqref{eqn:schurapprox} as
the problem is weakly scaled. The number of iterations required grows slowly as
the mesh is refined.}
\label{tab:results}
\end{table}

\begin{table}
\centering
\begin{tabular}{c|c|c|c}
\toprule
$\varepsilon$ & $\Delta x$ & $\Delta t$ & Iterations \\
\midrule
0.02  & 0.01 & 0.0004 & 8.0 \\
0.01  & 0.005&  0.0001 & 8.0 \\
0.005 &  0.0025 & 0.000025 & 8.0 \\
\bottomrule
\end{tabular}
\caption{Average number of GMRES iterations per Newton step with preconditioner
\eqref{eqn:prec} and Schur complement approximation \eqref{eqn:schurapprox} as
the interfacial thickness $\varepsilon$ is varied (and with it the spatial
discretization $\Delta x$ and timestep $\Delta t$). The performance of the
preconditioner does not change as $\varepsilon \to 0$.}
\label{tab:epsresults}
\end{table}

The essential properties of a good preconditioner are that the Krylov iteration
counts are low, they grow slowly (if at all) with mesh refinement, and they are
robust to variation in parameters. We therefore examined the average number of
Krylov iterations required per Newton step in two numerical experiments: in the
the first, all parameters were fixed, and only the mesh was refined; in the
second, the interfacial thickness $\varepsilon$ was reduced, and with it the
spatial discretisztion (to resolve the interface) and the temporal
discretization (to retain stability).  The experiments were conducted on
Hexagon, a Cray XE6m-200 hosted at the University of Bergen, and ARCHER, a Cray
XC30 hosted at the University of Edinburgh.

The results of the first experiment are shown in Table \ref{tab:results}. The
average number of Krylov
iterations per Newton step remains very close to 8, even as the number of
degrees of freedom is increased by four orders of magnitude. The results of the
second experiment are shown in Table \ref{tab:epsresults}. The average number of Krylov
iterations required per Newton step does not vary as $\varepsilon \to 0$.

\section{Conclusions}

We have presented a new preconditioner for the Ohta--Kawasaki equations that
model diblock copolymer melts. An approximation to the Schur complement was
derived using a matching strategy. The preconditioner proposed yields mesh
independent convergence, is robust to changes in interfacial thickness if a
timestep criterion is satisfied, and requires no reassembly or reconstruction
between Newton steps. This enables the solution of very fine discretizations
with billions of degrees of freedom.

\bibliographystyle{siam}
\bibliography{literature}
\end{document}